\numberwithin{equation}{section}
\begin{document}

\title[The exotic heat-trace asymptotics revisited]
{The exotic heat-trace asymptotics of a \\ regular-singular operator revisited}

\author{Boris Vertman}
\address{Mathematisches Institut,
Universit\"at Bonn,
53115 Bonn,
Germany}
\email{vertman@math.uni-bonn.de}
\urladdr{www.math.uni-bonn.de/people/vertman}

\subjclass[2010]{58J52; 34B24}
\date{This document compiled on: \today.}

\begin{abstract}
{We discuss the exotic properties of the heat-trace 
asymptotics for a regular-singular operator with general boundary conditions at 
the singular end, as observed by Falomir, Muschietti, Pisani and Seeley in \cite{FMPS:UPZ}
as well as by Kirsten, Loya and Park in \cite{KLP:UPR}. We explain how their results alternatively follow from 
the general heat kernel construction by Mooers \cite{Moo:HKA}, 
a natural question that has not been addressed yet, as the latter work did not elaborate
explicitly on the singular structure of the heat trace expansion beyond the statement of 
non-polyhomogeneity of the heat kernel.}
\end{abstract}

\maketitle

\section{Introduction}

In this paper we revisit the discussion of the heat kernel and the unusual properties of the heat trace expansion 
for a general self-adjoint realization in $L^2(0,1)$ of the regular-singular operator ($\R^+=(0,\infty)$)
$$
\Delta_\nu = -\frac{d^2}{dx^2} + \frac{1}{x^2}\left(\nu^2-\frac{1}{4}\right): C^\infty_0(\R^+) \to C^\infty_0(\R^+),
\quad \nu \geq 0,
$$
where $C^\infty_0(\R^+)$ denotes the space of smooth functions with compact support in $\R^+$.
\medskip

The heat trace expansion of $\Delta_\nu$ for $\nu >0$ is well-understood in 
\cite{KLP:EEA} and in \cite{Moo:HKA}, and in fact does not exhibit new phenomena, 
compare in particular the general discussion of Gil, Krainer and Mendoza \cite{GKM:TEF}.
The intricate case is rather $\nu=0$, where the relation between the explicit approaches of 
\cite{FMPS:UPZ}, \cite{KLP:UPR} and the methods of the heat kernel construction in 
\cite{Moo:HKA} is less obvious.\medskip

The regular-singular operators $\Delta_\nu$ arise naturally in the spectral geometry of spaces with isolated 
conical singularities, modelled by a bounded generalized cone 
$(\mathscr{C}(N)= \R^+ \times N, g=dx^2 \oplus x^2 g_N)$
over a closed Riemannian manifold $(N,g^N)$ of dimension $n$. This includes the example of 
a higher-dimensional disc $\mathscr{D}\subset \R^{n+1}$, where the Euclidean metric takes the 
form $dx^2 \oplus x^2 g_{S^{n}}$ with respect to standard polar coordinates. 
The Laplace-Beltrami operator on $(\mathscr{C}(N), g)$ is a symmetric operator 
in $L^2(\mathscr{C}(N), g)$ and takes the form
$$
\Delta_{\mathscr{C}(N)} = -\partial_x^2 - \frac{n}{x} \, \partial_x + x^{-2}\Delta_N.
$$
Under the unitary transformation (see \cite{BruSee:RES})

$$\Phi: L^2(\mathscr{C}(N), g) \to L^2(\R^+, L^2(N,g_N), dx), 
\, \w \mapsto x^{n/2} \w, $$ 

the Laplacian $\Delta_{\mathscr{C}(N)}$ unitarily transforms to 
$$
\Phi \circ \Delta_{\mathscr{C}(N)} \circ \Phi^{-1} = 
-\partial_x^2 + \frac{1}{x^2}\left( \Delta_N + \frac{(n-1)^2}{4}- \frac{1}{4} \right)=:\Delta^{\Phi}_{\mathscr{C}(N)}.
$$
The spectral decomposition on the cross section $(N,g_N)$ decomposes $\Delta^{\Phi}_{\mathscr{C}(N)}$
into a direct sum of operators $\Delta_\nu$ over the $\nu^2$-eigenspaces
of $\Delta_N$.  We are interested here in the operator over the kernel of $\Delta_N$ (for $n=1$)
$$
\Delta = -\frac{d^2}{dx^2} - \frac{1}{4x^2}: C^\infty_0(\R^+) \to C^\infty_0(\R^+).
$$

The spectral properties of $\Delta$ for general self-adjoint boundary 
conditions have been subject of careful analysis by Falomir, Muschietti, Pisani and Seeley in \cite{FMPS:UPZ}
as well as by Kirsten, Loya and Park in \cite{KLP:UPR}. In both instances 
the authors uncovered new unsual phenomena in the heat trace expansion 
of $\Delta$, if the boundary conditions at $x=0$ do not define 
the Friedrichs extension. Their (and our) result reads as follows.

\begin{theorem}\label{main}
The self-adjoint boundary conditions of $\Delta$ at $x=0$ are parametrized
by $\theta \in [0,\pi)$, where $\theta=\pi/2$ corresponds to the Friedrichs extension.
We write $\Delta(\theta)$ for the corresponding self-adjoint realization. With 
$$\kappa_\theta := \gamma -\log 2 + \tan \theta,$$ where $\gamma$ denotes 
the Euler-Mascheroni constant, we find for $\theta\neq \pi/2$
\begin{align*}
\textup{Tr}_1 e^{-t\Delta(\theta)} &:= \int_0^1 e^{-t\Delta(\theta)}(x,x) \, dx \sim 
\textup{Tr}_1 e^{-t\Delta(\pi/2)} + \sum_{j = 0}^\infty a_j t^{\,j} \\
& + \frac{1}{\pi} \textup{Im} \left(\int_1^\infty \frac{e^{-ty}}{y}
(\log y + i\pi + 2 \kappa_\theta)^{-1} dy \right), \ t\to 0.
\end{align*}
\end{theorem}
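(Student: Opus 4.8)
The plan is to reduce the heat trace to the trace of a \emph{resolvent difference} relative to the Friedrichs extension, and then to recover the heat trace by a contour integral of $e^{-t\lambda}$ against that resolvent-difference trace. First I would describe the extensions explicitly. Since $\nu=0$ is the double-indicial-root case, every element of the maximal domain has the asymptotic form $u(x)\sim p\,\sqrt{x}\log x + q\,\sqrt{x}$ as $x\to 0$, and the self-adjoint boundary conditions are the one-parameter family of Lagrangian relations between the boundary data $(p,q)$; the Friedrichs condition is $p=0$, which I identify with $\theta=\pi/2$. For $\lambda$ off the spectrum I construct the Green's function of $\Delta(\theta)$ on $(0,1)$, with Dirichlet condition at $x=1$, from the pair of solutions $u_\theta(\,\cdot\,;\lambda)$ (satisfying the $\theta$-condition at $0$) and $\psi(\,\cdot\,;\lambda)$ (satisfying $\psi(1)=0$), both entire in $\lambda$ and expressible through the modified Bessel functions $I_0,K_0$ (equivalently $J_0,Y_0$) of the $\nu=0$ equation.

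The second step is a trace formula. Writing $W_\theta(\lambda)$ for the Wronskian $W(u_\theta,\psi)$, whose zeros are exactly the eigenvalues of $\Delta(\theta)$, the trace of the resolvent difference is the logarithmic derivative $\textup{Tr}[(\Delta(\theta)-\lambda)^{-1}-(\Delta(\pi/2)-\lambda)^{-1}]=-\frac{d}{d\lambda}\log\bigl(W_\theta(\lambda)/W_{\pi/2}(\lambda)\bigr)$. Expanding $\psi$ near $x=0$ in the basis of the Friedrichs solution $u_{\pi/2}\sim\sqrt{x}$ and a logarithmic companion $\sim\sqrt{x}\log x$, a short Wronskian computation gives $W_\theta/W_{\pi/2}=2\kappa_\theta-q(\lambda)/p(\lambda)$ for entire coefficients $p,q$. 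The crucial input is the large-$|\lambda|$ behavior, which I read off from the small-argument expansion $K_0(z)=-\log(z/2)-\gamma+O(z^2\log z)$: this forces $q/p\sim\tfrac12\log(-\lambda)+(\gamma-\log 2)$ away from the positive real axis, so that $W_\theta/W_{\pi/2}$ behaves like $-\tfrac12\log(-\lambda)$ shifted by a constant built from $\kappa_\theta$. This is exactly where the constants $\gamma$, $\log 2$ and $\tan\theta$ enter through $\kappa_\theta$.

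The third step is the contour integral. I write the heat-trace difference as $\frac{1}{2\pi i}\int_\Gamma e^{-t\lambda}\bigl(-\tfrac{d}{d\lambda}\log(W_\theta/W_{\pi/2})\bigr)\,d\lambda$ with $\Gamma$ encircling the spectrum, and split the integrand into its leading logarithmic part and a remainder admitting a full asymptotic expansion in integer powers of $1/\lambda$. The remainder integrates, by the standard Mellin/Hankel computation $\frac{1}{2\pi i}\int_\Gamma e^{-t\lambda}\lambda^{-s}\,d\lambda=t^{s-1}/\Gamma(s)$, to the regular series $\sum_{j\ge 0}a_j t^{\,j}$. For the leading part I collapse $\Gamma$ onto the positive real axis, where $\log(-\lambda)$ jumps: setting $\lambda=y\pm i0$ gives $\log(-\lambda)=\log y\mp i\pi$. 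The two-sided discontinuity of $(\log(-\lambda)+2\kappa_\theta)^{-1}$, weighted by the $1/\lambda$ coming from the logarithmic derivative, reassembles precisely into $\frac{1}{\pi}\textup{Im}\int_1^\infty\frac{e^{-ty}}{y}(\log y+i\pi+2\kappa_\theta)^{-1}\,dy$, the lower limit $1$ and the difference between this and the full $\int_0^\infty$ being absorbed into the coefficients $a_j$. Restoring the baseline $\textup{Tr}_1 e^{-t\Delta(\pi/2)}$ yields the claimed expansion, and the same $\theta$-dependent correction is identifiable with the contribution of the deficiency subspace in the parametrix of \cite{Moo:HKA}, explaining the non-polyhomogeneity observed there.

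The main obstacle I anticipate is twofold. First, pinning down the exact boundary condition so that the constant in $W_\theta/W_{\pi/2}$ is precisely $2\kappa_\theta$ rather than some unnormalized variant requires careful bookkeeping of the $K_0$-constant $\gamma-\log 2$ together with the $\tan\theta$ from the Lagrangian relation; a stray sign or factor of $2$ here merely shifts the $a_j$ but must be tracked to land the stated combination $\log y+i\pi+2\kappa_\theta$. Second, and more seriously, although $W_\theta$ and $W_{\pi/2}$ are entire with discrete real spectrum, the exotic term arises from their purely logarithmic growth at infinity; justifying the collapse of $\Gamma$ onto the asymptotic branch cut---that is, that the discrete eigenvalue sum is captured, uniformly as $t\to 0$, by the smooth discontinuity integral plus a genuine power series---is the analytic heart of the argument and the step I expect to demand the most care.
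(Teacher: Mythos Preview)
Your outline is a valid route to Theorem~\ref{main}, but it is \emph{not} the paper's approach; in fact it is essentially the resolvent/Wronskian method of \cite{KLP:UPR} and \cite{FMPS:UPZ} that the paper sets out to \emph{replace}. The paper's stated goal is to show that the exotic expansion already sits inside Mooers' heat-kernel construction \cite{Moo:HKA}: one writes $E_\theta = E_+ + N'E_+ *_t K_\theta *_t N'E_+$, where $K_\theta$ is the inverse Laplace transform of $(\log\sqrt{\zeta}+\kappa_\theta)^{-1}$ arising from the signaling problem (Lemma~\ref{G}, Theorem~\ref{4-3}). The trace is then computed directly from this convolution formula: Proposition~\ref{K-integral} deforms the contour defining $K_\theta$ to obtain $K_\theta(t)=2\int_1^\infty e^{-ty}\bigl((\log y+2\kappa_\theta)^2+\pi^2\bigr)^{-1}dy + K_\theta^1(t)$ with $K_\theta^1$ smooth, and Proposition~\ref{TN} gives the elementary identity $\textup{Tr}_1(N'E_+*_tN'E_+)=\tfrac12+O(t^\infty)$; combining these and integrating in $\tilde t$ produces the stated imaginary-part integral plus a power series.

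What each approach buys: your resolvent method packages everything into the single scalar $W_\theta/W_{\pi/2}$ and is agnostic about the heat kernel's internal structure, but---as you yourself flag---justifying the contour collapse and the splitting into exotic part versus power series is delicate, and the link to Mooers' non-polyhomogeneity remains an a posteriori remark. The paper's method is more hands-on but makes that link the actual mechanism: the exotic term is literally $\textup{Tr}_1$ of the Mooers correction $N'E_+*_tK_\theta*_tN'E_+$, and the $\log$-type singularity is isolated already at the level of $K_\theta$ before any trace is taken. Your concluding sentence about ``the same $\theta$-dependent correction is identifiable with the contribution of the deficiency subspace in the parametrix of \cite{Moo:HKA}'' is precisely what the paper proves rather than asserts.
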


Note the absense of fractional $t$-powers due to missing contribution from the regular boundary in the present setup.

Both discussions of \cite{FMPS:UPZ} and \cite{KLP:UPR} have been performed independently from the earlier work by 
Mooers \cite{Moo:HKA}, who in particular constructed the heat kernel of $\Delta$
for a general self-adjoint realization and observed its non-polyhomogeneity
in case of a non-Friedrichs extension at $x=0$. \medskip

However, \cite{Moo:HKA} did not elaborate further on the particular 
structure of the non-polyhomogeneous heat kernel and its heat trace 
behaviour, leading to the natural question of whether her analysis
can be reconciled with the explicit results in \cite{FMPS:UPZ}
and \cite{KLP:UPR}. Hereby, certain computational inconsistencies in 
\cite{Moo:HKA} become apparent, see Remark \ref{error}.\medskip

In this note we address this issue and show how the 
heat trace expansion results in \cite{KLP:UPR} follow straightforwardly from
the heat kernel construction of Mooers in \cite{Moo:HKA}. At various points we 
chose to provide arguments, alternative to \cite{Moo:HKA}.
\medskip

This paper is organized as follows. We first classify the 
self-adjoint realizations for $\Delta$ in \S \ref{self-adjoint-section}, and study the solution 
to the signaling problem in \S \ref{signal-section}.
The solution to the signaling problem is the central ingredient 
in the construction of the heat kernel for general self-adjoint 
boundary conditions, which is explained in \S \ref{heat-section}
and is basically a revision of \cite{Moo:HKA}. 
In \S \ref{auxiliary} we establish well-definement of the various 
expressions in the general heat kernel formula. In the final \S 
\ref{unusual-section} we derive the heat trace expansion 
directly from the heat kernel structure in \S \ref{heat-section}, thus
reproving Theorem \ref{main} by a different method.

\section*{Acknowledgements}
The author would like to thank Thomas Krainer for helpful remarks and gratefully 
acknowledges the support by the Hausdorff Research Institute at
the University of Bonn.

\tableofcontents

\section{Self-adjoint boundary conditions}\label{self-adjoint-section}

Consider the differential operator 

\begin{align*}
\Delta = -\frac{d^2}{dx^2} - \frac{1}{4x^2}: C^\infty_0(\R^+) \to C^\infty_0(\R^+).
\end{align*}

acting on $C^\infty_0(\R^+) \subset L^2(\R^+)$. We always denote by $C^\infty_0(I)$ 
the space of smooth functions with compact support in $I\subset \R$. We define 
the \emph{minimal} closed extension $\Delta_{\min}$ to be the graph closure of 
$\Delta$, which is a densely defined and symmetric operator. The \emph{maximal}
closed extension is $\Delta_{\max} := (\Delta_{\min})^*$. 
\medskip

We then have the following well-known characterization of the maximal domain $\mathscr{D}(\Delta_{\max})$,
written out in various sources, including \cite{BruSee:RES}, \cite{Moo:HKA}, \cite{FMPS:UPZ},\cite{KLP:UPR},
\cite{LesVer:RSS} and \cite{Ver:ZDF}.

\begin{prop}\label{max-domain}
Let $f \in \mathscr{D}(\Delta_{\max})$. Then there exist constants 
$c_+(f), c_-(f)\in \C$ depending only on $f$, and a continuously differentiable $\widetilde{f}\in \mathscr{D}(\Delta_{\min})$, 
with $\widetilde{f}(x)=O(x^{3/2}\log(x))$ and $\widetilde{f}'(x)=O(x^{1/2}\log(x))$, as $x\to 0$,
such that 
$$
f(x)=c_+(f) \sqrt{x} + c_-(f)\sqrt{x}\log(x) + \widetilde{f}(x).
$$
Moreover, for any $f,g\in \mathscr{D}(\Delta_{\max})$ the following Green's identity holds
$$
\langle \Delta f, g \rangle - \langle f, \Delta g \rangle = 
\overline{c_-(f)}c_+(g) - \overline{c_+(f)}c_-(g)
$$
\end{prop}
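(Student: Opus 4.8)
The plan is to analyze $\mathscr{D}(\Delta_{\max}) = \{f \in L^2(\R^+) : \Delta f \in L^2(\R^+)\}$ by reducing everything to the ordinary differential equation $\Delta f = g$ near the singular endpoint $x=0$. Since $\Delta$ is equidimensional (of Euler type), the homogeneous equation $\Delta u = 0$ admits the two exact, linearly independent solutions $u_1(x) = \sqrt{x}$ and $u_2(x) = \sqrt{x}\log x$; these reflect the double indicial root $s = 1/2$ of $(s-1/2)^2 = 0$, which is exactly the degeneration producing the logarithmic term in the expansion. A direct computation gives the constant Wronskian $W(u_1, u_2) = u_1 u_2' - u_1' u_2 \equiv 1$. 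Both $u_1$ and $u_2$ are square-integrable near $x=0$, so the singular endpoint is in the limit circle case and carries two-dimensional boundary data, whereas at $x=\infty$ the operator is in the limit point case; hence all boundary behaviour is concentrated at $x=0$.

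First I would establish the decomposition. Given $f \in \mathscr{D}(\Delta_{\max})$, put $g := \Delta f \in L^2$ and solve the inhomogeneous equation by variation of parameters, taking the particular solution
\[
\widetilde{f}(x) = u_1(x)\int_0^x u_2(t)\,g(t)\,dt - u_2(x)\int_0^x u_1(t)\,g(t)\,dt,
\]
which is well defined since $W \equiv 1$ and which satisfies $\Delta \widetilde{f} = g$. Then $\Delta(f - \widetilde{f}) = 0$, so $f - \widetilde{f} = c_+ u_1 + c_- u_2$ for unique constants; this defines $c_\pm(f)$ and yields $f = c_+(f)\sqrt{x} + c_-(f)\sqrt{x}\log x + \widetilde{f}$.

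Next I would prove the regularity and decay of $\widetilde{f}$. Since $g \in L^2$, Cauchy--Schwarz together with $\int_0^x t\,dt = O(x^2)$ and $\int_0^x t\log^2 t\,dt = O(x^2\log^2 x)$ as $x\to 0$ gives $|\int_0^x u_1 g| = O(x)$ and $|\int_0^x u_2 g| = O(x\log x)$; multiplying by $u_1, u_2$ yields $\widetilde{f}(x) = O(x^{3/2}\log x)$, and differentiating under the integral sign (the two boundary contributions cancel by the Wronskian) yields $\widetilde{f}'(x) = O(x^{1/2}\log x)$, as asserted. It then remains to verify that $\widetilde{f}\in\mathscr{D}(\Delta_{\min})$: since $c_\pm(\widetilde{f}) = 0$, one approximates $\widetilde{f}$ in the graph norm by functions in $C^\infty_0(\R^+)$, using a cutoff near $x=0$ (and near $\infty$) followed by mollification. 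I expect this step, in tandem with the sharp decay estimates above, to be the main obstacle: the logarithmic weights are borderline, and one must control $\widetilde{f}$ and $\Delta\widetilde{f} = g$ \emph{simultaneously} in $L^2$ as the cutoff is removed, thereby confirming that $(c_+, c_-)$ is precisely the obstruction to lying in $\mathscr{D}(\Delta_{\min})$.

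Finally, the Green's identity follows from integrating by parts twice. The potential term $-1/(4x^2)$ is formally self-adjoint and cancels, so $\langle \Delta f, g\rangle - \langle f, \Delta g\rangle$ reduces to the Wronskian-type boundary term $[\overline{f}\,g' - \overline{f}'\,g]$ evaluated at the endpoints; the contribution at $\infty$ vanishes by the limit point property, leaving only $x=0$. Substituting the expansion, the $\widetilde{f}$-terms contribute nothing by the decay just proved, while the cross terms between $u_1$ and $u_2$ are governed by $W(u_1, u_2) = 1$; collecting them produces exactly the antisymmetric pairing $\overline{c_-(f)}\,c_+(g) - \overline{c_+(f)}\,c_-(g)$.
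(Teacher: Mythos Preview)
The paper does not supply its own proof of this proposition; it records the statement as well known and defers to the cited references (\cite{BruSee:RES}, \cite{Moo:HKA}, \cite{FMPS:UPZ}, \cite{KLP:UPR}, \cite{LesVer:RSS}, \cite{Ver:ZDF}), so there is no argument here to compare against. Your variation-of-parameters approach via the fundamental system $u_1=\sqrt{x}$, $u_2=\sqrt{x}\log x$ with $W(u_1,u_2)\equiv 1$ is exactly the standard route taken in those references, and your Cauchy--Schwarz estimates for $\widetilde{f},\widetilde{f}'$ and the Wronskian computation for the boundary pairing are correct.

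One technical point worth tightening: the identity $f=c_+u_1+c_-u_2+\widetilde{f}$ cannot hold globally on $\R^+$ as written, since $u_1,u_2\notin L^2(\R^+)$ near infinity while both $f$ and $\widetilde{f}\in\mathscr{D}(\Delta_{\min})$ are in $L^2(\R^+)$. The decomposition is meant locally near $x=0$ (consistently with the fact that the decay hypotheses on $\widetilde{f}$ are stated only as $x\to 0$); equivalently, replace $u_1,u_2$ by $\chi u_1,\chi u_2$ for a cutoff $\chi\in C^\infty_0[0,1)$ identically $1$ near $0$, define $\widetilde{f}:=f-c_+\chi u_1-c_-\chi u_2$, and then run your cutoff-approximation argument to place $\widetilde{f}$ in $\mathscr{D}(\Delta_{\min})$. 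With that adjustment your proof is complete.
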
 \ \medskip

An extension $\mathscr{D}(\Delta_{\min}) \subseteq \mathscr{D} \subseteq \mathscr{D}(\Delta_{\max})$
is self-adjoint, if 
$$
\mathscr{D} = \{ f\in \mathscr{D}(\Delta_{\max}) \mid \forall g\in \mathscr{D}: 
\langle \Delta f, g \rangle_{L^2} = \langle f, \Delta g \rangle_{L^2} \}.
$$

Consequently we obtain a self-adjoint extension of $\Delta$ by choosing the 
following boundary operators on $f\in \mathscr{D}(\Delta_{\max})$
\begin{equation}
B_\theta(f) := \cos \theta \cdot c_+(f) + \sin \theta \cdot c_-(f), 
\quad \theta \in [0,\pi).
\end{equation}
We define then
\begin{align}
\dom (\Delta(\theta)):= \{f\in \mathscr{D}(\Delta_{\max}) 
\mid B_\theta(f)=0\}.
\end{align}
The extensions $\Delta(\theta), \theta \in [0,\pi)$
define self-adjoint realizations of $\Delta$ by Proposition \ref{max-domain},
and in fact classify completely all self-adjoint boundary conditions at $x=0$.

\section{The signaling solution}\label{signal-section}

The fundamental component in the heat kernel construction 
of Mooers in \cite{Moo:HKA} is the \emph{signaling solution} $F(h)(\cdot, t)\in \dom(\Delta_{\max}), t\in \R^+_0$,
with $\R^+_0=[0,\infty)$, defined for any given $h\in L^1_{\textup{loc}}(\R^+_0) \cap C^\infty(\R^+)$, 
as a solution to the so-called \emph{signaling problem}

\begin{equation}
\begin{split}
(\partial_t + \Delta) F(h)(x,t)&=0, \quad F(h)(\cdot,0)\equiv 0, \\
c_-(F(h)(\cdot, t))&=h(t),\ t>0.
\end{split}
\end{equation}

Note that there does not exist $\theta \in [0,\pi)$, such that $F(h)(\cdot, t)\in \dom(\Delta(\theta))$ for 
$t\in \R^+_0$, since by uniqueness of solutions to 
the heat equation, $F(h)(\cdot,0)\equiv 0$ then implies $F(h)\equiv 0$. 
The signaling solution is in fact unique, since for $c_-(F(h)(\cdot, t))=0$, 
$F(h)(\cdot, t)\in \dom(\Delta(\pi/2))$ and hence 
$F(h)(\cdot,0)\equiv 0$ then implies $F(h)\equiv 0$.
\medskip

Let the differential expression $\Delta$ act on $C_0^\infty(\R^+)$ and 
consider the heat kernel $E_+(x,\wx,t)$ of the Friedrichs extension of $\Delta$
in $L^2(\R^+)$. We write $N'E_+(x,t):= \wx^{-1/2}E_+(x,\wx,t) \restriction \wx=0$. 
By \cite{Les:OFT} and the asymptotics of the modified Bessel functions,
see \cite{AbrSte:HOM}

$$I_{\nu}(z)\sim \frac{z^{\nu}}{2^{\nu}\Gamma (\nu+1)}, \quad 
\textup{as $z\to 0$},$$
we have

\begin{equation}
\label{Friedrichs-heat}
\begin{split}
E_+(x,\wx,t) &= \frac{\sqrt{x\wx}}{2t} I_0\left(\frac{x\wx}{2t}\right) 
\exp\left(- \frac{x^2+\wx^2}{4t}\right), \\
N'E_+(x,t) &= \frac{\sqrt{x}}{2t} \exp\left(- \frac{x^2}{4t}\right).
\end{split}
\end{equation}

\begin{lemma} \cite[Proposition 4.2]{Moo:HKA} \\
For any $h\in L^1_{\textup{loc}}(\R^+_0) \cap C^\infty(\R^+)$ the signaling solution is given by
$$F(h)(x,t) := - \int_0^t h(t-\wt) N'E_+(x,\wt) d\wt.$$
\end{lemma}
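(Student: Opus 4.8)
The plan is to verify directly that the claimed formula
$$
F(h)(x,t) = -\int_0^t h(t-\wt)\, N'E_+(x,\wt)\, d\wt
$$
satisfies all three defining properties of the signaling problem, and then invoke the uniqueness already established in the text. First I would check that $F(h)$ solves the homogeneous heat equation $(\partial_t + \Delta)F(h) = 0$ away from $x=0$. Since $N'E_+(x,\wt)$ is built from the Friedrichs heat kernel $E_+$, it satisfies $(\partial_t+\Delta_x)N'E_+ = 0$ in the interior; the convolution in time is the delicate part, because differentiating the integral in $t$ produces a boundary term at $\wt = t$ from the Leibniz rule in addition to the interior term. The plan is to show the boundary term, governed by the $\wt\to 0$ behaviour of $N'E_+(x,\wt) = \tfrac{\sqrt{x}}{2\wt}\exp(-x^2/4\wt)$, vanishes for $x>0$, so that only the interior contribution survives and cancels against $\Delta_x F(h)$.

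Next I would verify the initial condition $F(h)(\cdot,0)\equiv 0$. This follows because the integral is over $[0,t]$, which degenerates to a point as $t\to 0$, provided the integrand stays controlled; here I would use that $h\in L^1_{\textup{loc}}$ near $0$ and that $N'E_+(x,\wt)$ is bounded for $x>0$ on compact time intervals. The most substantive step is the third condition, namely computing the coefficient $c_-(F(h)(\cdot,t))$ in the asymptotic expansion from Proposition \ref{max-domain} and showing it equals $h(t)$. For this I would expand $N'E_+(x,\wt) = \tfrac{\sqrt{x}}{2\wt}\exp(-x^2/4\wt)$ as $x\to 0$ and extract the coefficient of $\sqrt{x}\log(x)$, the singular term that defines $c_-$. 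After integrating against $h(t-\wt)$ over $\wt\in(0,t)$, the delicate cancellation of the $\wt\to 0$ singularity in $N'E_+$ should reproduce exactly $h(t)$, up to the sign that explains the minus in the formula.

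The main obstacle I expect is the simultaneous control of the two limits $x\to 0$ and $\wt\to 0$ in the last step: the Gaussian factor $\exp(-x^2/4\wt)$ concentrates near $\wt = 0$ precisely where $1/\wt$ blows up, so the extraction of the $\sqrt{x}\log x$ coefficient requires a careful uniform estimate rather than naive interchange of limit and integral. The cleanest route is probably to perform the computation on the Laplace-transform (or resolvent) side, where the time-convolution becomes a product and the boundary asymptotics of the Friedrichs resolvent kernel in terms of modified Bessel functions $I_0, K_0$ make the logarithmic term explicit via the expansion $K_0(z)\sim -\log(z/2)-\gamma$ as $z\to 0$. I would then transform back to recover $c_-(F(h)(\cdot,t)) = h(t)$, and finally appeal to the uniqueness argument in the preceding paragraph to conclude that this $F(h)$ is \emph{the} signaling solution.
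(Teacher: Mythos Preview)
Your plan is correct in outline and would establish the lemma, but the route you choose for the crucial step --- identifying $c_-(F(h)(\cdot,t))=h(t)$ --- differs from the paper's. The paper does \emph{not} pass to the Laplace transform here; instead it works directly in the time domain by (i) writing $h(t-\wt)=h(t)-\wt\,\mathscr{H}(t-\wt)$ with $\mathscr{H}$ smooth, (ii) substituting $T=x^2/\wt$ in the convolution integral to decouple the two competing limits $x\to 0$ and $\wt\to 0$, and (iii) expanding $e^{-T/4}$ as a power series on $[x^2/t,1]$ to extract the $\sqrt{x}\log x$ coefficient explicitly. This is elementary and applies to every $h\in L^1_{\textup{loc}}(\R^+_0)\cap C^\infty(\R^+)$ with no growth restriction.

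Your Laplace-transform route is precisely what the paper reserves for the \emph{next} lemma (Lemma~\ref{G}), where it computes $c_+(F(h))$ --- a quantity the paper explicitly remarks cannot be obtained by the direct expansion. So your approach is not wrong, but it imports two additional issues you would need to handle: a growth hypothesis on $h$ (or a time-localization argument) so that $\mathscr{L}h$ exists, and a justification that $c_-$ commutes with $\mathscr{L}^{-1}$, i.e.\ that the remainder term $\sqrt{x}\,\mathscr{L}^{-1}\bigl((\mathscr{L}h)\,\widetilde K_0(x\sqrt{\cdot})\bigr)$ is $O(x^{3/2})$ uniformly. The paper carries out exactly this justification in the proof of Lemma~\ref{G}. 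In short, your method trades an elementary substitution for Bessel-function machinery; the payoff is that it simultaneously yields $c_+$, while the paper's direct method for the present lemma is shorter and hypothesis-free but gives only $c_-$. You also omit the verification that $F(h)(\cdot,t)\in\dom(\Delta_{\max})$, which is needed before Proposition~\ref{max-domain} even applies; the paper checks this from the explicit formula for $N'E_+$.
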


\begin{proof}
\cite[Proposition 4.2]{Moo:HKA} establishes the statement by deriving the formula
for $F(h)$ conceptually. Here, for purposes of brevity, we simply show by a direct computation that the solution
$F(h)$ above indeed provides the signaling solution.
With $N'E_+$ solving the heat equation, clearly $(\partial_t + \Delta) F(h)(x,t)=0$.
\medskip

By the explicit formulas \eqref{Friedrichs-heat} we find for any fixed $t>0$, 
$F(h)(\cdot, t) \in L^2(\R^+)$. Similarly, differentiating $F(h)(x,t)$ explicitly in the
first argument, we find $\Delta F(h)(\cdot, t) \in L^2(\R^+)$ and consequently, 
$F(h)(\cdot, t) \in \dom (\Delta_{\max})$. \medskip

It remains to identify $c_-(F(h)(\cdot, t))$. We write $h(t-\wt)=:h(t)-\wt \mathscr{H}(t-\wt),$
where $\mathscr{H}\in C^\infty(\R^+)$ by smoothness of $h$. Then, substituting $T=x^2/\wt$
in the integrals below, we find
\begin{align*}
&F(h)(x,t) = - \int_0^t h(t-\wt) N'E_+(x,\wt) d\wt \\&= 
- \int_{x^2/t}^\infty h\left(t-\frac{x^2}{T}\right) \frac{\sqrt{x}}{2T} \exp\left(-\frac{T}{4}\right) dT \\
&=- h(t) \sqrt{x} \int_{x^2/t}^\infty \frac{1}{2T} \exp\left(-\frac{T}{4}\right) dT  
\\&+ (\sqrt{x})^5 \int_{x^2/t}^\infty \mathscr{H}\left(t-\frac{x^2}{T}\right) \frac{1}{2T^2} \exp\left(-\frac{T}{4}\right) dT\\
&=:I_1 + I_2.
\end{align*}
For the first integral and $x^2<t$ (note $t>0$ is fixed) we obtain 
\begin{align*}
I_1&= - h(t) \sqrt{x}  \int\limits_{x^2/t}^1 \sum_{k=0}^\infty \frac{(-1)^kT^{k-1}}{2^{2k+1}k!} dT 
- h(t) \sqrt{x} \int\limits_{1}^\infty \frac{1}{2T} \exp\left(-\frac{T}{4}\right) dT \\
&= h(t) \sqrt{x} \log(x) + O(\sqrt{x}), \ x\to 0. 
\end{align*}
For the second integral $I_2$ we obtain similarly $I_2=O(\sqrt{x})$ as $x \to 0$, 
and hence indeed find $c_-(F(h)(\cdot, t))=h(t)$.
\end{proof}

It should be noted that the naive expansion of $F(h)$ does not lead to a closed
expression of $c_+(F(h))$, which requires rather integral transform arguments and 
is the fundamental aspect of the next section.

\section{Heat kernel for general boundary conditions}\label{heat-section}

We can now discuss the heat kernel for a self-adjoint 
operator
\begin{equation*}
\dom(\Delta(\theta)) = \{f\in \dom(\Delta_{\max}) \!\mid \!
B_\theta(f) := \cos \theta \, c_+(f) + \sin \theta \,  c_-(f)=0\},
\end{equation*}
with  $\theta \in [0,\pi)$.
By the Duhamel principle, the heat trace expansion of 
any self-adjoint extension of $\Delta$ in $L^2(0,1)$ with boundary conditions
$B_\theta$ at $x=0$ and separated boundary conditions at $x=1$,
differs from the heat trace expansion of $\Delta(\theta)$ (trace integral taken over $[0,1]$)
by the classical trace contribution from the regular boundary $x=1$ and $O(t^\infty), t\to 0$,
cf. for instance Lesch \cite[Theorem 1.4.11]{Les:OFT}.
\medskip

The heat kernel $E_+$ for the Friedrichs extension $\Delta(\pi/2)$ is well-known with 
the explicit expression written out in \eqref{Friedrichs-heat}. Fix any $\theta \in [0,\pi)\backslash \{\pi/2\}$. 
We do not distinguish notationally between the pseudodifferential operators and their Schwartz kernels.
Consider $\phi \in C^\infty_0(\R^+_0)$, vanishing to infinite order as $x\to 0$, and put $u=E_+\phi$.  
We seek to correct $u$ by $F(h)$ for an appropriate $h\in L^1_{\textup{loc}}(\R^+_0) \cap C^\infty(\R^+)$, to satisfy 
the boundary conditions $B_\theta(u+F(h))=0$. Note 
\begin{equation}
\label{correction}
\begin{split}
\w &:= u + F(h) \in \dom (\Delta(\theta)) \\ 
&\Leftrightarrow c_+(u) +c_+(F(h)) = -\tan (\theta) \,  c_-(F(h))\\
&\Leftrightarrow N'E_+\phi + G *_t h(t) = -\tan (\theta)\, h(t),
\end{split}
\end{equation}
where $G$ is the convolution kernel mapping $h\equiv c_-(F(h))$ to $c_+(F(h))$.
We make $G(t)$ explicit using the Laplace transform. For any $g\in L^1_{\textup{loc}}(\R^+_0) \cap C^\infty(\R^+)$, 
not growing exponentially as $t\to\infty$, the Laplace transform is defined as follows
\begin{align}\label{laplace}
(\mathscr{L}g)(\zeta) = \int_{\R^+} g(t)\exp(-\zeta t) dt, \textup{Re}(\zeta) >0.
\end{align}
The inverse Laplace transform is given for any $\delta >0$ and analytic $L(\zeta)$, 
integrable over $\textup{Re}(\zeta) = \delta$, by
\begin{align}\label{laplace-inverse}
(\mathscr{L}^{-1}L)(t) = \frac{1}{2\pi i}\int_{\delta + i\, \R} e^{t\zeta} L(\zeta) \, d\zeta.
\end{align}

\begin{lemma}\label{G}
\begin{align}
\mathscr{L}G(\zeta) = \log \sqrt{\zeta} + \gamma - \log 2.
\end{align}
\end{lemma}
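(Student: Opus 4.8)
The plan is to exploit that the signaling solution $F(h)(x,t)=-\int_0^t h(t-\wt)\,N'E_+(x,\wt)\,d\wt$ is a convolution in the time variable, so that the Laplace transform \eqref{laplace} converts it into a product. Writing $\mathscr{L}_t$ for the Laplace transform in $t$, one has $\mathscr{L}_t F(h)(x,\zeta)=-\mathscr{L}h(\zeta)\cdot \mathscr{L}_t N'E_+(x,\zeta)$, so the whole computation reduces to understanding the single kernel $\mathscr{L}_t N'E_+(x,\zeta)$ and its behaviour as $x\to 0$.

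First I would compute $\mathscr{L}_t N'E_+(x,\zeta)$ explicitly. Inserting $N'E_+(x,\wt)=\frac{\sqrt{x}}{2\wt}\exp(-x^2/4\wt)$ from \eqref{Friedrichs-heat} and using the classical integral representation $\int_0^\infty \wt^{-1}\exp(-a/\wt-\zeta\wt)\,d\wt=2K_0(2\sqrt{a\zeta})$, valid for $\mathrm{Re}(\zeta)>0$ and $a>0$, with $a=x^2/4$, I obtain the closed form
$$
\mathscr{L}_t N'E_+(x,\zeta)=\sqrt{x}\,K_0\!\left(x\sqrt{\zeta}\right).
$$
Hence $\mathscr{L}_t F(h)(x,\zeta)=-\mathscr{L}h(\zeta)\,\sqrt{x}\,K_0(x\sqrt{\zeta})$, and it remains to read off the coefficients of $\sqrt{x}$ and $\sqrt{x}\log x$ as $x\to 0$.

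Next I would insert the small-argument asymptotics of the modified Bessel function, $K_0(z)=-\log(z/2)-\gamma+O(z^2\log z)$ as $z\to 0$, to get
$$
\sqrt{x}\,K_0(x\sqrt{\zeta})=-\sqrt{x}\log x+\sqrt{x}\bigl(-\log\sqrt{\zeta}+\log 2-\gamma\bigr)+O(x^{5/2}\log x).
$$
On the other hand, by Proposition \ref{max-domain} applied to $F(h)(\cdot,t)\in\dom(\Delta_{\max})$, one has the expansion $F(h)(x,t)=c_+(F(h))(t)\sqrt{x}+c_-(F(h))(t)\sqrt{x}\log x+\widetilde{F}(x,t)$ with $\widetilde{F}=O(x^{3/2}\log x)$. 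Taking $\mathscr{L}_t$ of this expansion and matching with the Bessel expansion above term by term, the $\sqrt{x}\log x$-coefficient reproduces $\mathscr{L}(c_-(F(h)))=\mathscr{L}h$, a consistency check with $c_-(F(h))=h$, while the $\sqrt{x}$-coefficient yields $\mathscr{L}(c_+(F(h)))(\zeta)=\mathscr{L}h(\zeta)\bigl(\log\sqrt{\zeta}+\gamma-\log 2\bigr)$. Since $c_+(F(h))=G*_t h$ and the Laplace transform turns convolution into multiplication, dividing by $\mathscr{L}h(\zeta)$ gives the claimed identity $\mathscr{L}G(\zeta)=\log\sqrt{\zeta}+\gamma-\log 2$.

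The main obstacle I anticipate is the justification of the last matching step: extracting the $x\to 0$ asymptotic coefficients and applying $\mathscr{L}_t$ are a priori operations in different variables, so one must argue that the expansion of $F(h)(x,t)$ in Proposition \ref{max-domain} is sufficiently uniform in $t$, with integrable and non-exponentially-growing coefficients, for the Laplace integral to commute with reading off the coefficients of $\sqrt{x}$ and $\sqrt{x}\log x$. A related minor point is fixing the branch of $\log\sqrt{\zeta}$ on $\mathrm{Re}(\zeta)>0$ so that the Bessel asymptotics and the inverse transform \eqref{laplace-inverse} are consistent; both are controlled by the explicit closed form $\sqrt{x}\,K_0(x\sqrt{\zeta})$, which is analytic in $\zeta$ on the right half-plane.
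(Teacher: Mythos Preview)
Your proposal is correct and follows essentially the same route as the paper: Laplace-transform the convolution $F(h)$ to $-\mathscr{L}h(\zeta)\,\sqrt{x}\,K_0(x\sqrt{\zeta})$, expand $K_0$ near zero, and read off the $\sqrt{x}$ and $\sqrt{x}\log x$ coefficients. The obstacle you flag about commuting $\mathscr{L}_t$ with the extraction of $c_\pm$ is exactly the point the paper singles out and resolves by applying $\mathscr{L}^{-1}$ to the Bessel expansion and checking that the remainder $\mathscr{L}^{-1}\bigl((\mathscr{L}h)\widetilde{K}_0(x\sqrt{\cdot})\bigr)$ stays $O(x^{3/2})$.
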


\begin{proof}
We compute, cf. \cite[p.31]{Moo:HKA}
\begin{align*}
(\mathscr{L}F(h))(x,\zeta) &= - (\mathscr{L}h)(\zeta) \cdot (\mathscr{L}N'E_+)(x,\zeta)
\\ &= - \sqrt{x} K_0(x\sqrt{\zeta})  (\mathscr{L}h)(\zeta),\quad  \textup{Re}(\zeta)>0
\end{align*}
where $K_0$ is the modified Bessel function of second kind, and in the definition of $\sqrt{\zeta}$
we fix the branch of logarithm in $\C\backslash \R^-$. We assume that $h(t)$ is not of exponential 
growth as $t\to\infty$, so that $(\mathscr{L}h)(\zeta)$ is well-defined for $\textup{Re}(\zeta)>0$. The Bessel function $K_0(z)$ admits 
an asymptotic expansion, see \cite{AbrSte:HOM} 
\begin{align}
K_0(z) \sim - \log(z) + (\log 2- \gamma ) + \widetilde{K}_0(z), \ \widetilde{K}_0(z)=O(z), \ z\to 0,
\end{align}
where $\gamma\in \R$ is the Euler constant. Consequently 
\begin{equation}
\begin{split}
c_+((\mathscr{L}F(h))(\cdot,\zeta)) &=  - (\mathscr{L}h)(\zeta)  (\log 2 -\log \sqrt{\zeta} - \gamma), \\
c_-((\mathscr{L}F(h))(\cdot,\zeta)) &=  (\mathscr{L}h)(\zeta).
\end{split}
\end{equation}
Taking the inverse Laplace transform, we obtain
\begin{align*}
F(h)(x,t) &=\sqrt{x}\log(x)\mathscr{L}^{-1}(c_-(\mathscr{L}F(h))) + 
\sqrt{x}\mathscr{L}^{-1}(c_+(\mathscr{L}F(h))) \\ &+ 
\sqrt{x}\mathscr{L}^{-1}((\mathscr{L}h)\widetilde{K}_0(x\sqrt{\cdot})),
\end{align*}
where each $\mathscr{L}^{-1}(c_{\pm}(\mathscr{L}F(h)))$ exists and 
$\mathscr{L}^{-1}((\mathscr{L}h)\widetilde{K}_0(x\sqrt{\cdot}))=O(x^{3/2})$,
uniformly as $x\to 0$. Consequently, indeed 
$$\mathscr{L}(c_{\pm}(F(h))) =c_{\pm} (\mathscr{L}F(h)).$$
This yields the following explicit expression for the Laplace transform of $G$
\begin{align}\label{LG}
(\mathscr{L}G)(\zeta) =  \log \sqrt{\zeta} + \gamma - \log 2.
\end{align}
\end{proof}

As already noted at the end of \S \ref{signal-section}, the explicit expression
for $c_+(F(h))= G *_t h(t)$, obtained in Lemma \ref{G} by means of integral transforms,
cannot be obtained by naive expansion of $F(h)(x,t)$ in $x$ directly.

\begin{remark}\label{error}
A similar argument may be performed using Fourier transformation instead of 
the Laplace transform. The Fourier transform has been used in \cite[Lemma 4.4]{Moo:HKA},
where however the factor $(\gamma - \log 2)$ from \eqref{LG} is incorrectly missing. 
More precisely, the scaling argument outlined in \cite{Moo:HKA} determines the Fourier
and the Laplace transform of $G$ to be given by $\log \sqrt{\zeta}$ only up to an 
additive constant, which cannot be specified by her method.
\end{remark}

Returning back to \eqref{correction} and taking the Laplace transform of 
its third identity, we obtain 
\begin{equation}\label{48}
\begin{split}
&h(t) = - \mathscr{L}^{-1} \left((\mathscr{L}G) + \tan(\theta)\right)^{-1} *_t N'E_+\phi =: -\, K_{\theta} *_t N'E_+ \phi, \\
&u+F(h) = \left(E_+ + N'E_+ *_t K_{\theta} *_t N'E_+ \right) \phi \in \dom (\Delta(\theta)),
\end{split}
\end{equation}

where the inverse Laplace transform in the formula for $K_{\theta}$ is given explicitly as follows.
For any $\delta>0$ and $\kappa_{\theta}:= \gamma -\log 2 + \tan \theta$ we have a priori
\begin{align*}
K_{\theta}(t) &= \frac{1}{2\pi i} \int_{\delta + i\R} \frac{e^{t\zeta}}{\left(\log \sqrt{\zeta} + \kappa_{\theta}\right)} \, d\zeta
= \frac{1}{2\pi i} \int_{i\R} \frac{e^{t\zeta}}{\left(\log \sqrt{\zeta} + \kappa_{\theta}\right)} \, d\zeta\\
&- \lim\limits_{R\to\infty} \frac{1}{2\pi i} \int_0^\delta e^{itR}e^{xt}  \left(\log \sqrt{R} + i \arctan(x/\delta)/2+ \kappa_{\theta}\right)^{-1} \, dx\\
&+ \lim\limits_{R\to\infty} \frac{1}{2\pi i} \int_0^\delta e^{-itR}e^{xt}  \left(\log \sqrt{R} - i \arctan(x/\delta)/2+ \kappa_{\theta}\right)^{-1} \, dx.
\end{align*}
The latter two integrals behave as $O((\log R)^{-1})$, as $R\to \infty$, and hence vanish in the limit. 
Consequently
\begin{equation}
\label{K}
K_{\theta}(t) = \frac{1}{2\pi i} \int_{i\R} e^{t\zeta}  \left(\log \sqrt{\zeta} + 
\gamma - \log 2 + \tan(\theta)\right)^{-1} \, d\zeta.
\end{equation}

Well-definement of $K_{\theta} *_t N'E_+ \phi$, and regularity of $h,F(h)$ is not obvious and requires a detailed 
analysis of $K_{\theta}$, which is the content of \S \ref{auxiliary} below. Anticipating this, 
we have proved in view of \eqref{48} the following

\begin{thm}\label{4-3}
The heat kernel $E_{\theta}$ of $\Delta(\theta), \theta\in (0,\pi)$ is given by
$$
E_{\theta} = E_+ + N'E_+ *_t K_{\theta} *_t N'E_+. 
$$
\end{thm}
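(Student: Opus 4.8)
The plan is to identify the proposed kernel $E_\theta$ with the Schwartz kernel of $e^{-t\Delta(\theta)}$ by verifying, on a dense set of initial data, the three defining properties of a heat kernel — the heat equation, membership in $\dom(\Delta(\theta))$ for $t>0$, and the correct initial data as $t\to 0^+$ — and then invoking uniqueness of the Cauchy problem. Accordingly, I would first fix $\phi\in C^\infty_0(\R^+_0)$ vanishing to infinite order at $x=0$; such functions contain $C^\infty_c(\R^+)$ and hence form a dense subspace of $L^2(\R^+)$, so it suffices to pin down the action of the candidate operator on them. By the computation recorded in \eqref{correction}--\eqref{48}, applying $E_+ + N'E_+ *_t K_\theta *_t N'E_+$ to $\phi$ produces exactly the corrected function
$$
w \;=\; u + F(h), \qquad u = E_+\phi, \qquad h = -\,K_\theta *_t N'E_+\phi,
$$
so the whole matter reduces to checking that $w$ is the solution of the heat equation for $\Delta(\theta)$ with initial datum $\phi$.

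For the heat equation itself, $u=E_+\phi$ solves $(\partial_t+\Delta)u=0$ because $E_+$ is the Friedrichs heat kernel, and $F(h)$ solves the same equation by the Lemma of \S\ref{signal-section}; hence so does $w$. For the boundary behaviour, $u$ lies in the Friedrichs domain (so $c_-(u)=0$ and $c_+(u)=N'E_+\phi$) and $F(h)\in\dom(\Delta_{\max})$ again by the Lemma of \S\ref{signal-section}, so that $w\in\dom(\Delta_{\max})$; moreover $h=-\,K_\theta *_t N'E_+\phi$ was chosen precisely so that the third identity of \eqref{correction} holds, i.e. $B_\theta(w)=\cos\theta\,c_+(w)+\sin\theta\,c_-(w)=0$, whence $w(\cdot,t)\in\dom(\Delta(\theta))$ for $t>0$. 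Finally, for the initial data, $u(\cdot,t)\to\phi$ as $t\to 0^+$ since $E_+$ is a heat kernel, while $F(h)(\cdot,0)\equiv 0$ by the defining zero initial condition of the signaling problem, so $w(\cdot,0)=\phi$.

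With these three properties in hand I would conclude by uniqueness: since $\Delta(\theta)$ is self-adjoint, the solution of $\partial_t w + \Delta(\theta)w = 0$, $w(0)=\phi$, $w(t)\in\dom(\Delta(\theta))$, is unique — this is the same uniqueness of solutions to the heat equation already exploited in \S\ref{signal-section} to single out the signaling solution — and therefore $w = e^{-t\Delta(\theta)}\phi$ for every $\phi$ in the dense subspace. Reading off the associated Schwartz kernel then yields $E_\theta = E_+ + N'E_+ *_t K_\theta *_t N'E_+$, as claimed.

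The main obstacle is not this bookkeeping but the analytic input it silently presupposes: one must know that the convolution $K_\theta *_t N'E_+\phi$ is well defined and that the resulting $h$ is locally integrable, smooth on $\R^+$, and of sub-exponential growth, so that $F(h)$ is a genuine signaling solution with vanishing initial data and the Laplace-transform manipulations underlying Lemma \ref{G} and \eqref{48} are legitimate. These facts hinge on a careful study of the contour integral \eqref{K} defining $K_\theta$, which is exactly what is deferred to \S\ref{auxiliary}; once they are granted, the theorem follows from the assembly above.
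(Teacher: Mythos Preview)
Your proposal is correct and follows essentially the same approach as the paper: the paper's ``proof'' of Theorem \ref{4-3} is precisely the construction in \eqref{correction}--\eqref{48} together with the regularity of $h$ and $F(h)$ deferred to \S\ref{auxiliary}, and you have simply made explicit the three verifications (heat equation, domain membership via $B_\theta(w)=0$, and initial data) plus the appeal to uniqueness that the paper leaves implicit in the phrase ``Anticipating this, we have proved in view of \eqref{48} the following.'' Your final paragraph correctly identifies that the genuine analytic content --- well-definedness and regularity of $K_\theta *_t N'E_+\phi$ --- is exactly what \S\ref{auxiliary} supplies.
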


\section{Analysis of the convolution kernel $K_{\theta}$}\label{auxiliary}

The first key step here is a specific integral representation of $K_{\theta}(t)$.

\begin{prop}\label{K-integral}
Let $\kappa(\theta):=  \gamma - \log 2 + \tan(\theta)$. Then there exists 
a bounded $K^1_{\theta}\in C^\infty(\R^+_0)$, such that
\begin{equation*}
\begin{split}
K_{\theta}(t) &= \frac{(-1)}{\pi} \textup{Im} \left\{
\int_1^\infty e^{-ty}  \left(\log \sqrt{y} + i\pi/2 + \kappa(\theta)\right)^{-1} \, dy \right\} +  K^1_{\theta}(t) \\
&= 2 \int_1^\infty e^{-ty} \left((\log y+ 2\kappa(\theta))^2 + \pi^2\right)^{-1} \, dy +  K^1_{\theta}(t).
\end{split}
\end{equation*}
\end{prop}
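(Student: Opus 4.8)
The plan is to evaluate the Bromwich-type integral \eqref{K} by deforming the contour $i\R$ onto a Hankel contour wrapped around the branch cut of $\zeta\mapsto\log\sqrt{\zeta}$ along $\R^-$, and to extract the asserted integral from the jump of the integrand across that cut. First I would locate the singularities of $e^{t\zeta}(\log\sqrt{\zeta}+\kappa(\theta))^{-1}$. Writing $\log\sqrt{\zeta}=\tfrac12\log|\zeta|+\tfrac{i}{2}\arg\zeta$ with $\arg\zeta\in(-\pi,\pi)$, the vanishing of $\log\sqrt{\zeta}+\kappa(\theta)$ forces $\arg\zeta=0$ and $|\zeta|=e^{-2\kappa(\theta)}$, so the integrand has a single pole, at $\zeta_0=e^{-2\kappa(\theta)}>0$ on the positive real axis. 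Since $\zeta_0$ lies strictly to the right of $i\R$, deforming the contour leftward toward the cut encloses no pole.

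For fixed $t>0$ and $0<\epsilon<R$ I would then apply Cauchy's theorem on the closed contour made up of the segment of $i\R$ between $-iR$ and $iR$, a large arc of radius $R$ in the left half-plane, the two segments flanking $\R^-$ at radii between $\epsilon$ and $R$, and a small circle of radius $\epsilon$ about the origin; since the integrand is holomorphic inside, the total integral vanishes. The main obstacle is the large arc: the symbol decays only logarithmically, $|(\log\sqrt{\zeta}+\kappa(\theta))^{-1}|=O((\log R)^{-1})$ uniformly on $|\zeta|=R$, so no crude modulus estimate suffices and I would invoke a Jordan-lemma argument. Since $\int_{\textup{arc}}|e^{t\zeta}|\,|d\zeta|$ is bounded uniformly in $R$ (indeed by $\pi/t$) while the symbol tends to $0$ uniformly on the arc, the arc integral vanishes as $R\to\infty$. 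On the small circle $e^{t\zeta}=O(1)$, the symbol is $O(|\log\epsilon|^{-1})$ and the arclength is $O(\epsilon)$, so that contribution is $O(\epsilon/|\log\epsilon|)\to0$.

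What survives is the contribution of the two segments flanking the cut, for which the boundary values of $\log\sqrt\zeta$ from above and below $\R^-$ are $\log\sqrt{y}\pm i\pi/2$ at $\zeta=-y$. Keeping careful track of orientation, the jump of the integrand yields
\[
K_\theta(t)=\frac{1}{2\pi i}\int_0^\infty e^{-ty}\Big[\big(\log\sqrt{y}+\kappa(\theta)-\tfrac{i\pi}{2}\big)^{-1}-\big(\log\sqrt{y}+\kappa(\theta)+\tfrac{i\pi}{2}\big)^{-1}\Big]\,dy.
\]
Combining the two fractions over the real positive denominator $(\log\sqrt{y}+\kappa(\theta))^2+\pi^2/4$ produces the purely imaginary numerator $i\pi$, and the elementary identity $(\log\sqrt{y}+\kappa(\theta))^2+\pi^2/4=\tfrac14\big((\log y+2\kappa(\theta))^2+\pi^2\big)$ turns this into $K_\theta(t)=2\int_0^\infty e^{-ty}\big((\log y+2\kappa(\theta))^2+\pi^2\big)^{-1}dy$; the same computation identifies this with the $\textup{Im}$-form in the statement, since $\textup{Im}\,(\xi+i\pi/2)^{-1}=-(\pi/2)(\xi^2+\pi^2/4)^{-1}$ for real $\xi$. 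Finally I would split the integral at $y=1$ and set $K^1_\theta(t):=2\int_0^1 e^{-ty}\big((\log y+2\kappa(\theta))^2+\pi^2\big)^{-1}dy$. Its boundedness and smoothness on $\R^+_0$ follow by differentiating under the integral sign: the denominator is bounded below by $\pi^2$, the factors $y^k e^{-ty}$ are bounded by $1$ on $[0,1]\times\R^+_0$, and the integrand is integrable near $y=0$ because the denominator blows up there, so every $t$-derivative exists and is uniformly bounded.
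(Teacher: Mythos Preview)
Your argument is correct and reaches the same conclusion as the paper, but by a genuinely different contour manipulation. The paper first splits the integral over $i\R$ at $\pm i$; the bounded piece over $i[-1,1]$ is kept intact as part of $K^1_\theta$, and each of the two unbounded rays is separately rotated through a quarter-circle $\gamma_R$ in the first (respectively fourth) quadrant of the auxiliary variable $x=\zeta/i$ onto the negative real $\zeta$-axis, with the small arcs $\gamma_1$ and $\overline{\gamma_1}$ absorbed into $K^1_\theta$ as well. You instead deform the whole of $i\R$ in one step onto a Hankel contour around the cut $\R^-$, dispatching the large semicircle by Jordan's lemma (the bound $\int_{\textup{arc}}|e^{t\zeta}|\,|d\zeta|\le\pi/t$ together with the uniform $O((\log R)^{-1})$ decay of the symbol) and the small circle by the trivial $O(\epsilon/|\log\epsilon|)$ estimate. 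This gives the full jump integral over $(0,\infty)$ at once, and your $K^1_\theta$ is then the explicit Laplace-type integral over $(0,1)$, whose smoothness and boundedness follow by dominated convergence from the uniform lower bound $\pi^2$ on the denominator. Your route is a bit more economical---one deformation, one large-arc estimate, and a very concrete $K^1_\theta$---while the paper's piecewise deformation avoids appealing to Jordan's lemma as such and works entirely with elementary bounds on quarter-arcs; either way the two $K^1_\theta$'s necessarily coincide, since both equal $K_\theta$ minus the same $\int_1^\infty$ term.
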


\begin{proof} We compute from \eqref{K}
\begin{align*}
K_{\theta}(t) &= \frac{1}{2\pi i} \int_{i\R} e^{t\zeta}  \left(\log \sqrt{\zeta} + \kappa(\theta)\right)^{-1} \, d\zeta \\
&= \frac{1}{2\pi i} \int_{i[-1,1]} e^{t\zeta}  \left(\log \sqrt{\zeta} + \kappa(\theta)\right)^{-1} \, d\zeta \\
&+ \frac{1}{2\pi} \int_1^\infty e^{itx}  \left(\log \sqrt{x} + i\pi/4 + \kappa(\theta)\right)^{-1} \, dx \\
&+ \frac{1}{2\pi} \int_1^\infty e^{-itx}  \left(\log \sqrt{\zeta} -i\pi/4 + \kappa(\theta)\right)^{-1} \, dx\\
&=: \widetilde{K}^1_{\theta}(t) + K^2_{\theta}(t) + K^3_{\theta}(t).
\end{align*}
The first summand $\widetilde{K}^1_{\theta}\in C^\infty(\R^+_0)$ is smooth and bounded. For the summand 
$K^2_{\theta}$ we deform the integration contour to $i[1,\infty)$. Let the contour
$\gamma_R := \{R \exp(i\phi) \mid \phi \in [0,\pi/2]\}$ be oriented counterclockwise. 

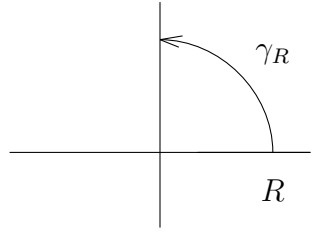
\begin{figure}[h]
\begin{center}
\begin{tikzpicture}
\draw (-2,0) -- (2,0);
\draw (0,-1) -- (0,2);

\draw (0.5,0) -- (2,0);
\draw (1.5,0) .. controls (1.5,0.8) and (0.8,1.5) .. (0,1.5);

\draw (0,1.5) -- (0.3,1.55);
\draw (0,1.5) -- (0.25,1.4);

\node at (1.5,-0.5) {$R$};
\node at (1.5,1.3) {$\gamma_R$};

\end{tikzpicture}
\end{center}
\label{contour}
\caption{The integration contour $\gamma_R$.}
\end{figure}

We use the $O$-notation for the asymptotics as $R\to \infty$.
Then, substituting $x=R \exp(i\phi)$, we find for some $C>0$
\begin{align*}
\int_{\gamma_R} \mid e^{itx}  &\left(\log \sqrt{x} + i\pi/4 + \kappa(\theta)\right)^{-1}\mid \, dx 
\\ &= R \int_0^{\pi/2} \mid\frac{\exp{(-t R\sin\phi + i(\phi + t R\cos\phi))}}
{\log \sqrt{R} + i(\phi/2 + \pi/4) + \kappa(\theta)}\mid \, d\phi  \\
&= R \int_0^{\pi/4} \mid \frac{\exp(-t R\sin\phi + i(\phi + t R\cos\phi))}
{\log \sqrt{R} + i(\phi/2 + \pi/4) + \kappa(\theta)}\mid  \, d\phi \\
& \leq \frac{CR}{\log R} \int_0^{\pi/4} \exp(-t R\sin\phi) \cos \phi \, d\phi + O(R^{-\infty}) \\
& = \frac{CR}{\log R} \int_0^{\sin \pi/4} \exp(-t R y) \, dy + O(R^{-\infty})=O\left(\frac{1}{\log R}\right),
\end{align*}
where in the fourth line we have used the fact that $\cos \phi$ is bounded 
from below for $\phi\in [0,\pi/4]$, and in the final line we substituted $y=\sin \phi$.
Consequently, we may indeed deform the integration contour of $K^2_{\theta}$ to $i[1,\infty)$ and find
\begin{align*}
K^2_{\theta}(t) &= \frac{i}{2\pi} \int_1^\infty e^{-ty}  \left(\log \sqrt{y} + i\pi/2 + \kappa(\theta)\right)^{-1} \, dy
\\ &+ \int_{\gamma_1} e^{itx}  \left(\log \sqrt{x} + i\pi/4 + \kappa(\theta)\right)^{-1} \, dx,
\end{align*}
where the second summand is clearly smooth and bounded on $\R^+_0$, since $\textup{Im}(x)\geq 0$ for $x\in \gamma_1$.
We denote the second summand in the expression for $K^2_\theta$ by $\widetilde{K}^2_\theta \in C^\infty(\R^+_0)$. 
By a similar exercise we deform the integration contour of $K^3_{\theta}$ to $(-i[1,\infty))$ and find
\begin{align*}
K^3_{\theta}(t) &= \frac{-i}{2\pi} \int_1^\infty e^{-ty}  \left(\log \sqrt{y} - i\pi/2 + \kappa(\theta)\right)^{-1} \, dy
\\ &+ \int_{\overline{\gamma_1}} e^{-itx}  \left(\log \sqrt{x} - i\pi/4 + \kappa(\theta)\right)^{-1} \, dx,
\end{align*}
where $\overline{\gamma_1}=\{\overline{z} \mid z\in \gamma_1\}$ is oriented clockwise; again the second 
summand is clearly smooth and bounded on $\R^+_0$,
since $\textup{Im}(x)\leq 0$ for $x\in \overline{\gamma_1}$.
We denote the second summand in the expression for $K^3_\theta$ by $\widetilde{K}^3_\theta \in C^\infty(\R^+_0)$. 
In total we now obtain using $i(z-\overline{z})=-2\textup{Im}(z)$ and setting 
$K^1_\theta := \widetilde{K}^1_\theta + \widetilde{K}^2_\theta + \widetilde{K}^3_\theta$
\begin{equation}
\label{5-1}
\begin{split}
K_{\theta}(t) &= \frac{(-1)}{\pi} \textup{Im} \left\{
\int_1^\infty e^{-ty}  \left(\log \sqrt{y} + i\pi/2 + \kappa(\theta)\right)^{-1} \, dy \right\} +  K^1_{\theta}(t) \\
&= 2 \int_1^\infty e^{-ty} \left((\log y+ 2\kappa(\theta))^2 + \pi^2\right)^{-1} \, dy +  K^1_{\theta}(t).
\end{split}
\end{equation}
\end{proof}

As a consequence of Proposition \ref{K-integral}, we find that $K_{\theta}\in L^1_{\textup{loc}}(\R^+_0) \cap C^\infty(\R^+)$ 
is bounded as $t\to\infty$. Integrability at $t=0$ follows using Fubini theorem. Indeed, we find for any $T>0$,
using \eqref{5-1}
\begin{align*}
& \int_1^\infty \int_0^T e^{-ty} \left((\log y+ 2\kappa(\theta))^2 + \pi^2\right)^{-1} \, dt \, dy\\
= &\int_1^\infty \left( \frac{1-e^{-Ty}}{y}\right) \left((\log y+ 2\kappa(\theta))^2 + \pi^2\right)^{-1} \, dy <\infty.
\end{align*}
On the other hand, also $N'E_+ \phi\in L^1_{\textup{loc}}(\R^+_0) \cap C^\infty(\R^+)$, bounded as $t\to\infty$.
Indeed, substituting $X=x/\sqrt{t}$ we find for $\textup{supp}(\phi)\subset [0,1)$
\begin{align*}
N'E_+ \phi(t) &=\int_0^1  \frac{\sqrt{x}}{2t} \exp\left(- \frac{x^2}{4t}\right) \phi(x)\, dx \\
&= t^{-1/4} \int_0^{1/\sqrt{t}} \frac{\sqrt{X}}{2}\exp\left(- \frac{X^2}{4}\right) \phi(X\sqrt{t})\, dX,
\end{align*} 
which shows integrability at $t=0$.
Consequently, the convolution $h=K_{\theta} *_t N'E_+ \phi$ indeed exists and $h\in C^\infty(\R^+)$ 
is continuous at $t=0$ and not growing exponentially as $t\to\infty$. 
In particular, the Laplace transform $\mathscr{L}(h)(\zeta)$ is well-defined for $\textup{Re}(\zeta)>0$. Similarly, 
for each fixed $x\in \R^+_0$, the signaling solution $F(h)(x,\cdot)\in C^\infty(\R^+)$ is continuous at $t=0$ and
its Laplace transform $\mathscr{L}F(h)(\zeta)$ is also well-defined for $\textup{Re}(\zeta)>0$.

\section{Unsual heat trace expansions}\label{unusual-section}

In this final section we study the asymptotic expansion of $\textup{Tr}_1 (E_{\theta}), \theta \in [0,\pi)\backslash \{\pi/2\}$ 
and establish Theorem \ref{main}. Discussion of the kernel $N'E_+ *_t K_{\theta} *_t N'E_+$
is central and consists of two steps. We begin with an explicit evaluation of asymptotics of $\textup{Tr}_1 N'E_+ *_t N'E_+$ as $t\to 0$.
\begin{prop}\label{TN}
$$\textup{Tr}_1 \, N'E_+ *_t N'E_+ \equiv \int_0^1 (N'E_+ *_t N'E_+) (x,t) \, dx
= \frac{1}{2} + O(t^{\infty}), \ t \to 0.$$ 
\end{prop}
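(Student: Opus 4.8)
The plan is to reduce everything to an elementary one-dimensional computation by exploiting the explicit Gaussian form of $N'E_+$ recorded in \eqref{Friedrichs-heat}. Writing the time convolution out on the diagonal $x=\wx$, I would start from
\begin{equation*}
\textup{Tr}_1\, N'E_+ *_t N'E_+ = \int_0^1 \int_0^t N'E_+(x,t-s)\, N'E_+(x,s)\, ds\, dx,
\end{equation*}
and insert $N'E_+(x,\tau) = \frac{\sqrt{x}}{2\tau}\exp(-x^2/(4\tau))$. Since the integrand is nonnegative, Tonelli's theorem justifies carrying out the $x$-integration first, which is the key simplification.

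The inner integral is then a Gaussian first moment over $[0,1]$. Using the identity $\frac{1}{t-s}+\frac{1}{s} = \frac{t}{s(t-s)}$ to combine the two exponentials, together with $\int_0^1 x\, e^{-ax^2}\,dx = \frac{1-e^{-a}}{2a}$, a short computation collapses the prefactors and yields the clean closed form
\begin{equation*}
\int_0^1 N'E_+(x,t-s)\, N'E_+(x,s)\, dx = \frac{1}{2t}\left(1 - \exp\left(-\frac{t}{4s(t-s)}\right)\right).
\end{equation*}
Integrating this over $s\in[0,t]$, the constant term contributes $\frac{1}{2t}\cdot t = \frac{1}{2}$, which is precisely the asserted leading value, while the exponential term is the remainder that must be shown to be $O(t^\infty)$.

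It remains to control the remainder
\begin{equation*}
R(t) := \frac{1}{2t}\int_0^t \exp\left(-\frac{t}{4s(t-s)}\right)\, ds.
\end{equation*}
Rescaling $s = t\sigma$ turns this into $R(t) = \frac{1}{2}\int_0^1 \exp\!\left(-\frac{1}{4t\sigma(1-\sigma)}\right) d\sigma$. Since $\sigma(1-\sigma)\le \frac14$ on $[0,1]$, the exponent is bounded above by $-1/t$ uniformly in $\sigma$, so $R(t)\le \frac12 e^{-1/t} = O(t^\infty)$ as $t\to 0$, which completes the proof. This last step is the only one requiring any thought: the degeneracy of $s(t-s)$ at the endpoints only strengthens the decay, and even the worst case $\sigma=1/2$ still produces exponential smallness in $1/t$. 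I therefore expect no genuine obstacle beyond bookkeeping, the sole subtlety being the interchange of integrations, which is immediate from nonnegativity via Tonelli.
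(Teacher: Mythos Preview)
Your proof is correct and follows essentially the same route as the paper: interchange the $x$- and $s$-integrations, evaluate the resulting Gaussian $x$-integral in closed form to obtain $\frac{1}{2t}\bigl(1-\exp(-\tfrac{t}{4s(t-s)})\bigr)$, then integrate in $s$ and bound the exponential remainder using $s(t-s)\le t^2/4$. The only cosmetic differences are that the paper substitutes $y=x^2$ before integrating and phrases the interchange via Fubini rather than Tonelli, and it does not rescale before estimating the remainder; none of this affects the argument.
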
 

\begin{proof}
We first substitute $y=x^2$
\begin{align*}
\int_0^1 &(N'E_+ *_t N'E_+) (x,t) \, dx = \int_0^1 \int_0^t N'E_+(x,t-\wt) N'E_+ (x,\wt) \, d\wt\, dx\\
&= \int_0^1 \int_0^t \frac{x}{4\, \wt\, (t-\wt)} \, \exp \left(- x^2 \left( \frac{1}{4(t-\wt)} + \frac{1}{4\wt}\right)\right)\, d\wt\, dx \\
&= \int_0^1 \int_0^t \frac{1}{8\, \wt\, (t-\wt)} \, \exp \left(- y \left( \frac{1}{4(t-\wt)} + \frac{1}{4\wt}\right)\right)\, d\wt\, dy.
\end{align*}
By Fubini theorem we may interchange the integration orders and find after integration in $y$
\begin{align*}
&\int_0^1 \int_0^t \frac{1}{8\, \wt\, (t-\wt)} \, \exp \left(- y \left( \frac{1}{4(t-\wt)} - \frac{1}{4\wt}\right)\right)\, d\wt\, dy \\
&=  \frac{1}{2t} \int_0^t \left(1 -  \exp \left(- \frac{t}{4\, \wt\, (t-\wt)}\right)\right) d\wt
= \frac{1}{2} + O(t^{\infty}), \ t\to 0,
\end{align*}
where in the last step we have estimated for $\wt\in [0,t]$
$$\exp \left(- \frac{t}{4\, \wt\, (t-\wt)}\right) \leq \exp \left(-\frac{1}{4\, t}\right).$$
\end{proof}

We can now discuss the trace of full kernel $N'E_+ *_t K_{\theta} *_t N'E_+$.

\begin{thm}
\begin{align*}
&\textup{Tr}_1 \left( N'E_+ *_t K_{\theta} *_t N'E_+\right) \sim \sum_{j=0}^\infty b_j t^j \\
 &+ \frac{1}{\pi} \textup{Im} \left( \int_1^\infty \frac{e^{-ty}}{y}\left(\log \sqrt{y} + i\pi + 2\kappa(\theta)\right) \right) \, dy,
\quad  t\to 0.
\end{align*}
\end{thm}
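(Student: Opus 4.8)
The plan is to reduce the trace of the triple convolution to a single convolution of $K_\theta$ against the diagonal quantity already computed in Proposition \ref{TN}, and then to substitute the integral representation of Proposition \ref{K-integral}. First I would observe that, for each fixed $x$, the kernels $N'E_+(x,\cdot)$, $K_\theta$, $N'E_+(x,\cdot)$ are functions of the single variable $t$, so that convolution in $t$ is commutative and associative and
$$\left(N'E_+ *_t K_\theta *_t N'E_+\right)(x,t) = \left(K_\theta *_t \left[N'E_+ *_t N'E_+\right]\right)(x,t).$$
Integrating over $x\in[0,1]$ and moving the $x$-integration through the $t$-convolution by Fubini — legitimate since $N'E_+ *_t N'E_+\ge 0$ and $K_\theta\in L^1_{\textup{loc}}(\R^+_0)$ by \S \ref{auxiliary} — turns the triple convolution trace into
$$\textup{Tr}_1\left(N'E_+ *_t K_\theta *_t N'E_+\right) = K_\theta *_t \textup{Tr}_1\left(N'E_+ *_t N'E_+\right),$$
so that no double integral need be re-evaluated. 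By Proposition \ref{TN}, $\textup{Tr}_1(N'E_+ *_t N'E_+) = \tfrac12 + R(t)$, and the estimate $\wt(t-\wt)\le t^2/4$ used there yields the uniform bound $|R(t)|\le \tfrac12 e^{-1/t}$; in particular $R(t) = O(t^\infty)$.

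Next I would split
$$\textup{Tr}_1\left(N'E_+ *_t K_\theta *_t N'E_+\right) = \frac12\int_0^t K_\theta(\sigma)\,d\sigma + \left(K_\theta *_t R\right)(t),$$
and discard the last term as $O(t^\infty)$ via $|(K_\theta *_t R)(t)|\le \tfrac12 e^{-1/t}\,\|K_\theta\|_{L^1[0,t]}$ with $\|K_\theta\|_{L^1[0,t]}$ bounded. Writing $\rho(y):=\big((\log y + 2\kappa(\theta))^2 + \pi^2\big)^{-1}$ and inserting Proposition \ref{K-integral}, Tonelli's theorem (valid since $\int_1^\infty \tfrac{1-e^{-ty}}{y}\rho(y)\,dy<\infty$, as recorded after Proposition \ref{K-integral}) gives
$$\frac12\int_0^t K_\theta(\sigma)\,d\sigma = \int_1^\infty \frac{1-e^{-ty}}{y}\,\rho(y)\,dy + \frac12\int_0^t K^1_\theta(\sigma)\,d\sigma.$$
The smooth term $\tfrac12\int_0^t K^1_\theta$ vanishes at $t=0$ and contributes the coefficients $b_j$ with $j\ge 1$; the constant $\int_1^\infty y^{-1}\rho(y)\,dy$, finite by the substitution $u=\log y$, supplies $b_0$; and the residual $-\int_1^\infty \tfrac{e^{-ty}}{y}\rho(y)\,dy$ is the exotic contribution. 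Finally the identity $\rho(y) = -\tfrac1\pi\,\textup{Im}\,(\log y + i\pi + 2\kappa(\theta))^{-1}$ converts this residual into $\tfrac1\pi\textup{Im}\big(\int_1^\infty \tfrac{e^{-ty}}{y}(\log y + i\pi + 2\kappa(\theta))^{-1}\,dy\big)$, the form appearing in Theorem \ref{main}.

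The step I expect to be most delicate is not any individual computation but the uniform control of the error terms: one must confirm that both the passage to $K_\theta *_t \textup{Tr}_1(N'E_+ *_t N'E_+)$ and the discarded convolution $K_\theta *_t R$ produce a genuinely $O(t^\infty)$ remainder rather than a merely polynomially small one. This rests on the uniform exponential bound $|R(t)|\le \tfrac12 e^{-1/t}$ together with the integrability of $K_\theta$ at the origin established in \S \ref{auxiliary}, which lets the rapid decay of $R$ dominate the mild singularity of $K_\theta$. I also note that the displayed factor reads $\log\sqrt{y}$ without an inverse, whereas the derivation above lands on $(\log y + i\pi + 2\kappa(\theta))^{-1}$ as in Theorem \ref{main}, suggesting two typographical slips in the statement.
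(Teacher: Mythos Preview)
Your proposal is correct and follows essentially the same route as the paper's proof: both combine Proposition~\ref{TN} with Proposition~\ref{K-integral} via Fubini/Tonelli to isolate the exotic integral $\int_1^\infty \frac{e^{-ty}}{y}\rho(y)\,dy$ from a smooth-in-$t$ remainder generated by $K^1_\theta$ and the constant term. Your rearrangement $N'E_+ *_t K_\theta *_t N'E_+ = K_\theta *_t (N'E_+ *_t N'E_+)$ merely streamlines the bookkeeping (the paper splits $K_\theta$ first and then interchanges), and you are right that the displayed statement carries two typographical slips---the missing inverse and $\log\sqrt{y}$ in place of $\log y$---relative to the form recorded in Theorem~\ref{main}.
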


\begin{proof}
Using Proposition \ref{K-integral} we find
\begin{align*}
&\textup{Tr}_1 \left( N'E_+ *_t K_{\theta} *_t N'E_+\right) \\
&= 2 \int_0^1 \int_0^t  \int_1^\infty \frac{e^{-(t-\wt)y}}{\left((\log y+ 2\kappa(\theta))^2 + \pi^2\right)} 
(N'E_+ *_t N'E_+)(x,\wt) \, dy \, d\wt \, dx\\
&+ \int_0^1 \int_0^t  K^1_{\theta}(t-\wt) (N'E_+ *_t N'E_+)(x,\wt)\, d\wt \, dx
=:T_1+T_2.
\end{align*}

We want to apply Fubini theorem and interchange the integration orders.
For this we note using Proposition \ref{TN}
\begin{align*}
&\int_1^\infty \int_0^t \int_0^1  \frac{e^{-(t-\wt)y}}{\left((\log y+ 2\kappa(\theta))^2 + \pi^2\right)} 
(N'E_+ *_t N'E_+)(x,\wt)  \, dx \, d\wt \, dy \\
&= \int_1^\infty \int_0^t  \frac{e^{-(t-\wt)y}}{\left((\log y+ 2\kappa(\theta))^2 + \pi^2\right)} 
\left(\frac{1}{2} + O(\wt^{\infty})\right) \, d\wt \, dy \\
&\leq \left(\frac{1}{2} + O(t^{\infty})\right)  \int_1^\infty 
\frac{1 - e^{-ty}}{y\left((\log y+ 2\kappa(\theta))^2 + \pi^2\right)} \, dy < \infty,
\end{align*}
due to $\log(y)^2$ behaviour in the denominator of the integrand.
Moreover 
\begin{align*}
 &\int_0^t \int_0^1  K^1_{\theta}(t-\wt) (N'E_+ *_t N'E_+)(x,\wt) \, dx \, d\wt  \\
&=  \int_0^t  K^1_{\theta}(t-\wt) \left(\frac{1}{2} + O(\wt^{\infty})\right) \, d\wt \\
&\leq \left(\frac{1}{2} + O(t^{\infty})\right)  \int_0^t  |K^1_{\theta}(t-\wt)| \, d\wt <\infty,
\end{align*}
due to smoothness of $K^1_{\theta}$. Consequently we may indeed interchange the integration 
orders and obtain using Proposition \ref{TN} and the integral representation of 
$K_{\theta}(t)$ in Proposition \ref{K-integral}
\begin{equation}
\label{T-1-exp}
\begin{split}
T_1 &=  \frac{(-1)}{\pi} \int_1^\infty \int_0^t \textup{Im} \left\{
e^{-(t-\wt)y}  \left(\log \sqrt{y} + i\pi/2 + \kappa(\theta)\right)^{-1} \right\} \\
&\times \int_0^1  (N'E_+ *_t N'E_+)(x,\wt) \, dx \, d\wt \, dy \\
&= \frac{1}{\pi} \int_1^\infty \textup{Im} \left\{
\frac{(e^{-ty}-1)}{y\left(\log \sqrt{y} + i\pi + 2\kappa(\theta)\right)} \right\} \, dy
 + O(t^\infty),
\end{split}
\end{equation}

as $t \to 0$. For $T_2$ we obtain
\begin{equation}
\label{T-2-exp}
\begin{split}
T_2 &= \int_0^t  K^1_{\theta}(t-\wt) \int_0^1 (N'E_+ *_t N'E_+)(x,\wt)\, dx \, d\wt \\
&= \int_0^t  K^1_{\theta}(t-\wt)  \left(\frac{1}{2} + O(\wt^{\infty})\right)  \, d\wt
\sim \sum_{j=0}^\infty c_j t^j,
\end{split}
\end{equation}
since $K^1_{\theta} \in C^\infty(\R^+_0)$.
The expansions \eqref{T-1-exp} and \eqref{T-2-exp} together yield a full
asymptotic expansion of $\textup{Tr}_1 \left( N'E_+ *_t K_{\theta} *_t N'E_+\right)$
\begin{align*}
&\textup{Tr}_1 \left( N'E_+ *_t K_{\theta} *_t N'E_+\right)  \\ &\sim
 \frac{1}{\pi} \textup{Im} \left( \int_1^\infty 
\frac{e^{-ty}}{y}\left(\log \sqrt{y} + i\pi + 2\kappa(\theta)\right) \right) \, dy
\\ &+ \sum_{j=0}^\infty b_j t^j, \quad  t\to 0.
\end{align*}
\end{proof}

Together with Theorem \ref{4-3} we obtain the statement of
Theorem \ref{main}.

\providecommand{\bysame}{\leavevmode\hbox to3em{\hrulefill}\thinspace}
\providecommand{\MR}{\relax\ifhmode\unskip\space\fi MR }
\providecommand{\MRhref}[2]{%
  \href{http://www.ams.org/mathscinet-getitem?mr=#1}{#2}
}
\providecommand{\href}[2]{#2}

\end{document}